\documentclass[12pt]{amsart}
\usepackage{graphicx}
%\graphicspath{{./Figures/}}
\usepackage{amssymb}
\usepackage{tikz}
\usetikzlibrary{matrix}
\usepackage{xcolor}
\usepackage{ytableau}

\usetikzlibrary{decorations.pathreplacing}

\definecolor{lightgray}{gray}{0.75}

\allowdisplaybreaks

\newcommand{\ds}{\displaystyle}

\newcommand{\Tr}{\mathrm{Tr}}
\newcommand{\bC}{\mathbb{C}}
\newcommand{\bZ}{\mathbb{Z}}
\newcommand{\tm}{\tilde{m}}
\newcommand{\bR}{\mathbb{R}}

\newcommand{\tM}{\widetilde{M}}

\newcommand{\im}{\mathrm{Im}}

\newcommand{\thebottomline}{\renewcommand{\thefootnote}{}
  \renewcommand{\footnoterule}{}
  \phantom{M}\footnotetext{\tiny{}\hfill
    \textit{\noindent\romannumeral\day.%
\romannumeral\month.\romannumeral\year}}}

\newtheorem{theorem}{Theorem}
\newtheorem{lemma}[theorem]{Lemma}
\newtheorem{corollary}[theorem]{Corollary}

\theoremstyle{definition}

\newtheorem{remark}[theorem]{Remark}

\title[Freely Independent Coin Tosses]
{Freely Independent Coin Tosses,\\[5pt] 
Standard Young Tableaux, and \\[5pt] the Kesten--McKay Law}

\author[i. s. arenas longoria]{Iris Stephanie Arenas Longoria}
\address{Department
  of Mathematics and Statistics, Queen's University, Jeffery
  Hall, Kingston, Ontario, K7L 3N6, Canada}

\email{18isal@queensu.ca}
\email{mingo@mast.queensu.ca} 
\thanks{Research supported by a Discovery Grant from the
  Natural Sciences and Engineering Research Council of
  Canada}

\author[j. a. mingo]{James A. Mingo} 
\address{Department
  of Mathematics and Statistics, Queen's University, Jeffery
  Hall, Kingston, Ontario, K7L 3N6, Canada}

\begin{document}

\begin{abstract} 
In this article, we shall start with a closed walk on a regular tree of degree $d$. These walks are described by the Kesten\-- McKay law which arises as the asymptotic distribution of a random $d$-regular graph on $n$ vertices. We will show that the moments of the Kesten \--McKay law are given by counting standard Young tableaux with at most 2 rows, and how some properties of the walk make sense even when $d$ is not an integer.  We will use free probability to instruct us how to build an explicit model in random matrix theory.
\end{abstract}

\maketitle

\section{Introduction.}

A \textit{walk} on a graph is a sequence of steps from one vertex to another  connected by an edge. The number of steps is the \textit{length} of the walk. The walk is \textit{closed} if it returns to its starting point. When we want to consider all walks with equal probability, we call these \textit{random walks}.  In analyzing walks, the first goal is to count the number of closed walks of a given length. 

If our graph is the infinite line, i.e., the vertices are indexed by the integers and  vertex $i$ is  connected to vertex $i+1$ by an edge, then the number of closed walks of length $2n$ is $\binom{2n}{n}$. If our graph is a half-line, then the number of closed walks of length $2n$ is the $n$th Catalan number $\frac{1}{n+1} \binom{2n}{n}$. If our graph has vertices indexed by the integer lattice $\bZ^2$, then the number of closed walks of length $2n$ is $\binom{2n}{n}^2$. 

\begin{figure}{t}
\includegraphics{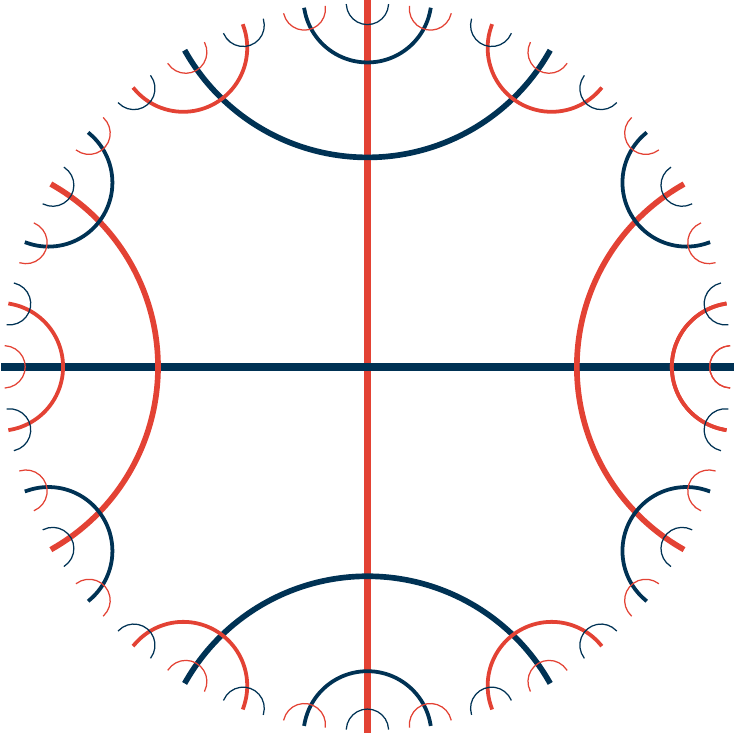}
\caption{\label{fig:4-tree}\small The 4-regular tree.}
\end{figure}

The graphs we shall be interested in are regular trees. See Figure \ref{fig:4-tree} for a drawing of a 4-regular tree. In the figure we only show the vertices 4 or fewer steps from the origin. Let $m_{2n}$ be the number of closed walks of length $2n$. We can see from the figure that $m_2 = 4$, $m_4 = 28$ and, with some more careful counting, that $m_6 = 232$. 

So far we have discussed the case of a $4$-regular tree, but we want to consider any $d$-regular tree. A crucial step is to let $c = d-1$ and to write $m_{2n}$ as a polynomial in $c$ with integer coefficients. This is where we see a connection to Pascal's triangle and from there to standard Young tableaux. Starting from the usual diagram, we remove all the entries to the right of the center line but retain the rule that each entry is the sum of the two entries in the row above. 

\setbox1=\hbox{{\tiny\begin{tikzpicture}
\matrix (m) [matrix of math nodes,row sep=1em,column sep=2em,minimum width=1em]
{
      &         &      &    &      &      &  1     \\
      &         &      &    &      & 1    &        \\
      &         &      &    & 1    &      &  1     \\
      &         &      & 1  &      & 2   &        \\
      &         &  1   &    & 3   &      &  2  \\
      &  1      &      & 4 &      & 5 &     \\
1     &         & 5   &    & 9 &      &  5  \\};
         \path (m-1-7) edge (m-2-6);
         \path (m-2-6) edge (m-3-7);
         \path (m-2-6) edge (m-3-5);
         \path (m-3-5) edge (m-4-4);
         \path (m-3-5) edge (m-4-6);
         \path (m-3-7) edge (m-4-6);         
         \path (m-4-4) edge (m-5-5);
         \path (m-4-4) edge (m-5-3);
         \path (m-4-6) edge (m-5-7);
         \path (m-4-6) edge (m-5-5);
         \path (m-5-3) edge (m-6-2);
         \path (m-5-3) edge (m-6-4);         
         \path (m-5-5) edge (m-6-4);
         \path (m-5-5) edge (m-6-6); 
         \path (m-5-7) edge (m-6-6);
         \path (m-6-2) edge (m-7-3);
         \path (m-6-2) edge (m-7-1);
         \path (m-6-4) edge (m-7-5);
         \path (m-6-4) edge (m-7-3);
         \path (m-6-6) edge (m-7-7);
         \path (m-6-6) edge (m-7-5);
\end{tikzpicture}}}
$\vcenter{\hsize=\wd1\box1}$ \hfill
\setbox2=\hbox{\begin{tabular}{l|l}
$n$ &  $m_{2n}$ \\ \hline
$0$ & $1$ \\
$1$ & $ 1+ c$ \\
$2$ & $ 1 + 3 c + 2 c^2$ \\
$3$ & $1 + 5 c + 9 c^2 + 5 c^3$ \\
%$4$ & $1 + 7 c + 20 c^2 + 28 c^3 + 14 c^4$ \\
\end{tabular}}
$\vcenter{\hsize=\wd2\box2}$\hfill\hbox{}

\medskip\noindent
If we look at the zeroth, second, fourth, and sixth rows we see that we have the same integers as in the table on the right. Our goal will be to prove that this holds for all rows and explain the connection between the two. 

We can deepen this relation by adding the parameter $c$ to the truncation of Pascal's triangle as in Figure \ref{fig:triangle_with_c}. In Section \ref{section:standard_young_tableau} we shall show that the coefficient of $c^k$ in the $n$th row is the number of standard Young tableaux with shape $(n-k,k)$; this is the main novelty of the paper.

The connection between the Catalan numbers and truncations of Pascal's triangle has been already studied (see \cite[Chapter 12]{k} and  \cite{t} for example); our contribution is to ``quantize'' this  diagram by the introduction of the continuous parameter $c$. For integer values of $c$ we can get the ordinary moment generating function for $\{m_{2n}\}_{n \geq 0}$, see Lemma \ref{lemma:m-generating-function}. Then we show that this is the same as a certain ordinary moment generating function for standard Young tableaux, see Theorem \ref{thm:main_syt}. This proves the pattern observed in Figure \ref{fig:triangle_with_c}. In particular, for non-integer values of $c$ for which there is no random walk.

In Section \ref{section:free_harmonic_analysis}, we examine the sequence $\{ m_{2n}\}_{n \geq 0}$ from the point of view of harmonic analysis. 
The moments are, for any $c > 0$, the moments of a probability measure on $\bR$, the \textit{Kesten-McKay law}. In Section \ref{section:random_matrix_theory}, we present the connection to random matrix theory and show how to model the law using the eigenvalue distribution of some random matrices. In Section \ref{section:free_probability}, we show how free probability gives a conceptual picture of why these models work and how to justify the claim that we have modelled freely independent coin tosses.

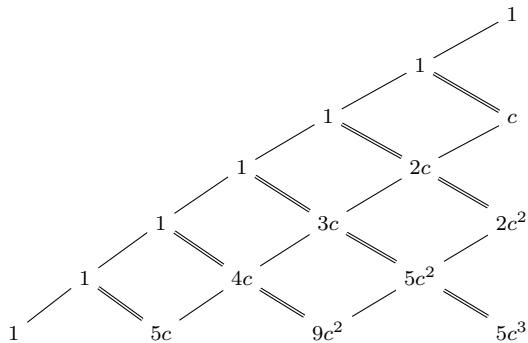
\begin{figure}
{\tiny\begin{tikzpicture}
\matrix (m) [matrix of math nodes,row sep=1em,column sep=2em,minimum width=1em]
{
      &         &      &    &      &      &  1     \\
      &         &      &    &      & 1    &        \\
      &         &      &    & 1    &      &  c     \\
      &         &      & 1  &      & 2c   &        \\
      &         &  1   &    & 3c   &      &  2c^2  \\
      &  1      &      & 4c &      & 5c^2 &     \\
1     &         & 5c   &    & 9c^2 &      &  5c^3  \\};
         \path (m-1-7) edge (m-2-6);
         \path (m-2-6) edge [double] (m-3-7);
         \path (m-2-6) edge (m-3-5);
         \path (m-3-5) edge (m-4-4);
         \path (m-3-5) edge [double] (m-4-6);
         \path (m-3-7) edge (m-4-6);         
         \path (m-4-4) edge [double] (m-5-5);
         \path (m-4-4) edge (m-5-3);
         \path (m-4-6) edge [double] (m-5-7);
         \path (m-4-6) edge (m-5-5);
         \path (m-5-3) edge (m-6-2);
         \path (m-5-3) edge [double] (m-6-4);         
         \path (m-5-5) edge (m-6-4);
         \path (m-5-5) edge [double] (m-6-6); 
         \path (m-5-7) edge (m-6-6);
         \path (m-6-2) edge [double](m-7-3);
         \path (m-6-2) edge (m-7-1);
         \path (m-6-4) edge [double](m-7-5);
         \path (m-6-4) edge (m-7-3);
         \path (m-6-6) edge [double](m-7-7);
         \path (m-6-6) edge (m-7-5);
\end{tikzpicture}}
\caption{\label{fig:triangle_with_c}%
We put double lines going down to the right to indicate that each element is $c$ times the element above and left plus the element to the above and right.}
\end{figure}

\section{An Ordinary Generating Function.}\label{sec:generating_function}

We let $\ds M(z) = 1 + \sum_{n=1}^\infty m_{2n} z^{2n}$ where $m_{2n}$ is the number of closed walks of length $2n$ on a $d$-regular tree. We will show that as a formal power series,
\[
M(z) = 
\frac{1}{2} \frac{(c- 1) - ( 1 + c)\sqrt{1 - 4 c z^2}}
{(1 + c)^2 z^2 - 1},\
\]
where $c = d - 1$. We will then show that the generating function for the even numbered rows of the triangle in Figure \ref{fig:triangle_with_c} satisfies the same equation.

Let $\tm_{2n}$ be the number of closed walks that return to the origin only once (at the end of the walk). Our idea is to write $m_{2n}$ as a sum of products of $\tm_{2k}$'s. For example, $m_2 = \tm_2$ because a closed walk of length 2 can only return once. Next $m_4 = \tm_4 + \tm_2 \tm_2$ because a walk of length 4 can return either once or twice. Likewise, $m_6 = \tm_2^3 + \tm_2 \tm_4 + \tm_4 \tm_2 + \tm_6$, where the first term counts the walks that return 3 times, the second and third terms count the walks that return 2 times, and the last term counts the walks that return only once. The point of doing this is that $\tm_{2n}$ is much easier to count. 

Consider a path in $\bR^2$ starting at $(0,0)$ and consisting of a sequence of  up-steps and down-steps where an  \textit{up-step} is  $(x, y) \mapsto (x + 1, y + 1)$ and a \textit{down step} is $(x, y) \mapsto (x+1 , y - 1)$. For $n \geq 0$, the number of paths from $(0, 0)$ to $(n, k)$ is $\binom{n}{(n+k)/2}$ if $n + k$ is even and $k \leq n$, and $0$ otherwise. Let us  recall the definition of a Dyck path. If $n, k \geq 0$, we say a path from $(0, 0)$ to $(n, k)$ is a \textit{weak Dyck path} if the path never goes below the line $ y = 0$.
A \textit{Dyck path} is a weak Dyck path that ends on the line $y=0$. An \textit{irreducible Dyck path} is a Dyck path that only returns once to the line $y=0$. It is a standard fact, see \cite[Corollary 6.2.3]{stan}, that the number of Dyck paths of length $2n$ is the $n$th Catalan number, $C_n$.

\begin{lemma}\label{lemma:1}
$\tm_{2n} = (1 + c) c^{n-1} C_{n-1}$, where $c = d -1$. 
\end{lemma}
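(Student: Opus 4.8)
The plan is to use a fiber-counting argument: project each tree walk onto its distance-from-the-origin profile, recognize that profile as an irreducible Dyck path, and then count how many walks lie above each fixed profile. Since the tree is $d$-regular with $c = d-1$, the combinatorics will factor cleanly as (number of profiles) $\times$ (walks per profile).

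First I would observe that a closed walk $v_0, v_1, \dots, v_{2n}$ of length $2n$ on the $d$-regular tree determines the sequence of distances from the origin, and that consecutive distances differ by exactly $\pm 1$, since any two adjacent vertices of a tree differ in distance by one. This sequence starts and ends at $0$ and never goes negative, so it is a Dyck path of length $2n$. The hypothesis that the walk returns to the origin only once, at the very end, is precisely the statement that the distance equals $0$ only at the two endpoints, that is, the profile is an \emph{irreducible} Dyck path. By the standard first-return decomposition (an up-step, a Dyck path of semilength $n-1$, a down-step), the number of irreducible Dyck paths of length $2n$ is $C_{n-1}$.

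Next I would count the walks lying over a fixed irreducible Dyck path by multiplying the number of choices available at each step. A down-step, moving toward the origin from a vertex at distance $k \geq 1$, is forced, since such a vertex has a unique neighbor at distance $k-1$ (its parent). An up-step, moving away from the origin from a vertex at distance $k \geq 1$, has exactly $c = d-1$ choices, since the vertex has $d$ neighbors, one of which is the parent. The sole exception is the first step, which leaves the origin: the root has $d = c+1$ neighbors, all at distance $1$. Because the profile is irreducible, the origin is visited only at time $0$, so exactly one up-step originates at the root (the first step, contributing the factor $c+1$) and the remaining $n-1$ up-steps originate at vertices of distance $\geq 1$ (each contributing a factor $c$). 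Hence the number of walks over any fixed irreducible Dyck path is $(c+1)\,c^{n-1}$, independent of the chosen path. Combining the two counts gives $\tm_{2n} = C_{n-1}(c+1)c^{n-1} = (1+c)c^{n-1}C_{n-1}$.

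The main point to get right is the bookkeeping of the exceptional first step: one must verify that irreducibility forces the origin to be left exactly once, so that precisely one of the $n$ up-steps carries the factor $c+1$ rather than $c$, and that every down-step is genuinely forced on a tree (where there are no alternative paths back toward the root). Once these structural facts are pinned down, the formula follows immediately from the product rule.
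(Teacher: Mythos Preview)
Your proof is correct and follows essentially the same approach as the paper: project the walk onto its distance-from-the-origin profile to obtain an irreducible Dyck path, count those as $C_{n-1}$, and count the fiber over each profile as $(1+c)c^{n-1}$ by noting the first (root) up-step has $d$ choices, the other $n-1$ up-steps have $d-1$ choices, and down-steps are forced. The only cosmetic difference is that you phrase the forced down-step via the unique-parent property of tree vertices, whereas the paper phrases it as reversing previous steps.
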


\begin{proof}
Each closed walk of length $2n$ on a $d$-regular tree gives us a Dyck path of length $2n$. Indeed, each step away from the origin produces an up-step, each step closer to the origin produces a down-step. If the closed walk of length $2n$ only returns to the origin once, then we get an irreducible Dyck path.

We claim that the number of closed walks of length $2n$ that only return once to the origin and that get assigned to a given irreducible Dyck path is $(1 + c) c^{n-1} $. 

In fact, there are $d$ ways of choosing the first step; furthermore, there are $d-1$ ways of choosing each of the remaining $n-1$ steps away from the origin. The only way to move closer to the origin is to reverse a previous step, in the reverse order that the steps were taken. Since $c=d-1$, the number of walks assigned to a given irreducible Dyck path is $(1 + c) c^{n-1}$.

The number of irreducible Dyck paths of length $2n$ is counted by the $(n-1)$th Catalan number $C_{n-1}$, because by removing the first and last step from an irreducible Dyck path of length $2n$ we get a Dyck path of length $2n-2$; conversely, given a Dyck path of length $2n-2$, we create an irreducible Dyck path of length $2n$ by adding an up-step at the beginning and a down-step at the end. 

Thus the number of closed walks of length $2n$ on a $d$-regular tree is $(1+c)c^{n-1} C_{n-1}$. 
\end{proof}

Returning to our example where $d = 4$, we have $\tm_{2n} = 4 \cdot 3^{n-1} \cdot C_n$. Thus $\tm_2 = 4$, $\tm_4 = 12$, and $\tm_6 = 72$. Hence $m_2 = \tm_2= 4$, $m_4 = \tm_4 + \tm_2 \tm_2 = 28$, and $m_6 = \tm_2^3 + \tm_2 \tm_4 + \tm_4 \tm_2 + \tm_6 = 232$ as claimed above. This method is completely general.

Now denote by $N(z) = \sum_{n=0}^\infty C_n z^n$  the ordinary generating function of the Catalan numbers. The Catalan numbers satisfy the relation $C_n = \sum_{m=1}^n C_{n-1} C_{n-m}$, from which we obtain  that $N$ satisfies $z N(z)^2 + 1 = N(z)$ and hence 
$N(z) = \frac{1 - \sqrt{1 - 4z}}{2z}$, see \cite[Lecture 2]{ns}. Let $\tM(z) = \ds\sum_{n=1}^\infty \tm_{2n} z^{2n}$. Then

\begin{eqnarray*} 
\tM(z) & = & (1 + c) \sum_{n=1}^\infty C_{n-1} c^{n-1} z^{2n} 
= z^2 (1 + c) \sum_{n=1}^\infty C_{n-1} (c z^2)^{n-1}  \\
& = &
 z^2 (1 + c) \sum_{n=0}^\infty C_{n} (cz^2)^n = 
 z^2 (1 + c) N(cz^2) \\
 & = &
 \frac{1 + c}{2c}\Big( 1 - \sqrt{1 - 4 c z^2}\Big).\\
\end{eqnarray*}
And so
\begin{eqnarray*}\lefteqn{%
1 - \tM(z) =  1- \frac{1 + c}{2c}\Big( 1 - \sqrt{1 - 4 c z^2}\Big)} \\
& = &
\frac{1}{2c} \big(2c - (1 + c)( 1 - \sqrt{1 - 4 c z^2})\big) \\
& = &
\frac{c- 1 + ( 1 + c)\sqrt{1 - 4 c z^2}}{2c}
\end{eqnarray*}

We verify the following.

\begin{lemma}The number of closed walks of length $2n$ that return to origin exactly $k$ times is the coefficient on $z^{2n}$ in $\tM(z)^k$.
\end{lemma}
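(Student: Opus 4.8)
The plan is to set up a bijective decomposition of each closed walk that returns to the origin exactly $k$ times into a concatenation of $k$ walks that each return only once, and then to read off the count as a Cauchy product. First I would observe that if a closed walk of length $2n$ returns to the origin exactly $k$ times, then (the graph being bipartite) these returns occur at even times, say after $2j_1, 2(j_1 + j_2), \dots, 2(j_1 + \cdots + j_k)$ steps, where each $j_i \geq 1$ and $j_1 + \cdots + j_k = n$. Cutting the walk at each return time splits it into $k$ consecutive segments, the $i$th of which starts and ends at the origin and touches the origin nowhere in between; that is, the $i$th segment is a closed walk of length $2j_i$ returning to the origin only once, and so is counted by $\tm_{2j_i}$.

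Next I would argue that this segmentation is a bijection. Given any ordered list of $k$ such once-returning walks of lengths $2j_1, \dots, 2j_k$, their concatenation is a closed walk of length $2n$ with $n = j_1 + \cdots + j_k$ that returns to the origin exactly $k$ times, and every walk returning exactly $k$ times arises in precisely this way. The point that makes the concatenation free is that once a walk sits at the origin, the choice of its next excursion is unconstrained by the preceding excursions. Consequently the number of closed walks of length $2n$ returning exactly $k$ times equals
\[
\sum_{\substack{j_1 + \cdots + j_k = n \\ j_i \geq 1}} \tm_{2j_1} \tm_{2j_2} \cdots \tm_{2j_k}.
\]

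Finally I would match this sum with a power-series coefficient. Since $\tM(z) = \sum_{j \geq 1} \tm_{2j} z^{2j}$ has vanishing constant term, expanding $\tM(z)^k$ and collecting the coefficient of $z^{2n}$ yields exactly the sum above, the index set being the compositions of $n$ into $k$ positive parts. Hence the coefficient of $z^{2n}$ in $\tM(z)^k$ is the number of closed walks of length $2n$ returning to the origin exactly $k$ times, as claimed.

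The step I expect to demand the most care is confirming that the segmentation is genuinely a bijection: one must check that gluing once-returning excursions neither creates nor erases an intermediate visit to the origin, so that a walk returning exactly $k$ times corresponds to exactly $k$ excursions, no more and no fewer. This is the only place where the combinatorial structure of the walks, rather than formal manipulation of power series, actually enters; everything else is the routine identification of a product of generating functions with a sum over compositions.
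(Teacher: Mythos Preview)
Your proposal is correct and follows essentially the same approach as the paper: decompose a walk returning exactly $k$ times into $k$ once-returning excursions, observe that the number of such walks with prescribed excursion lengths $2j_1,\dots,2j_k$ is $\tm_{2j_1}\cdots\tm_{2j_k}$, and then recognize the sum over compositions of $n$ as the coefficient of $z^{2n}$ in $\tM(z)^k$. If anything, your write-up is slightly more explicit than the paper's about why the cut-and-glue map is a bijection.
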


\begin{proof}
Let $W_{n,k}$ be the set of walks of length $2n$ that return to the origin exactly $k$ times. Thus, for $l_1,  l_2, \dots , l_k \geq 1$ such that $l_1 +  l_2 + \cdots + l_k =n$, let $W_{n,k}(l_1,  \dots , l_k)$ be the subset of $W_{n,k}$ consisting of walks that return to the origin at the $k$ points $\lbrace 2l_1 , 2l_2, 2l_3, \dots , 2l_k \rbrace$. By Lemma 1, $|W_{n,k}(l_1,  \dots , l_k)|=\tm_{2l_1} \tm_{2l_2} \cdots \tm_{2l_k}$. Then

\[
|W_{n,k}(l_1,  \dots , l_k)|z^{2n}=(\tm_{2l_1}z^{2l_1} ) \cdots (\tm_{2l_k} z^{2l_k});
\]

\begin{eqnarray*}
|W_{n,k}|z^{2n}&=&\sum_{\substack{l_1, \dots , l_k \geq 1 \\ l_1 + \cdots + l_k =n}}|W_{n,k}(l_1,  \dots , l_k)|z^{2n} \\
&=&
\sum_{\substack{l_1, \dots , l_k \geq 1 \\ l_1 + \cdots + l_k =n}}
(\tm_{2l_1}z^{2l_1} ) \cdots (\tm_{2l_k} z^{2l_k}).
\end{eqnarray*}
This last expression is exactly the term containing $z^{2n}$ in $(\tM(z))^k$.\end{proof}

Recall that $m_{2n}$ is the number of closed walks of length $2n$ on a $(1+c)$-regular tree, and $M(z)$ is the generating function of $\{m_{2n}\}$.

\begin{lemma}\label{lemma:m-generating-function}
\[
M(z) = 
\frac{1}{2} \frac{(c- 1) - ( 1 + c)\sqrt{1 - 4 c z^2}}
{(1 + c)^2 z^2 - 1}.
\]
\end{lemma}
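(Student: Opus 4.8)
The plan is to exploit the unique decomposition of a closed walk into its irreducible pieces, which converts the already-computed formula for $\tM(z)$ into a formula for $M(z)$ via a geometric series. Every closed walk of length $2n \geq 2$ can be cut at each of its returns to the origin, expressing it uniquely as a concatenation of $k$ irreducible closed walks, where $k$ is the total number of returns. By the preceding lemma, the number of closed walks of length $2n$ returning exactly $k$ times is the coefficient of $z^{2n}$ in $\tM(z)^k$, so summing over all possible numbers of returns gives, as a formal power series,
\[
M(z) = 1 + \sum_{n=1}^\infty m_{2n} z^{2n} = 1 + \sum_{k=1}^\infty \tM(z)^k = \sum_{k=0}^\infty \tM(z)^k = \frac{1}{1 - \tM(z)}.
\]
This identity is legitimate as a formal power series because $\tM(z)$ has no constant term (it starts at $z^2$), so each coefficient on the right is a finite sum and no convergence question arises.

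Next I would substitute the expression for $1 - \tM(z)$ obtained just before the statement,
\[
1 - \tM(z) = \frac{(c-1) + (1+c)\sqrt{1 - 4cz^2}}{2c},
\]
giving $M(z) = 2c \big/ \big[(c-1) + (1+c)\sqrt{1 - 4cz^2}\big]$. To match the claimed form I would then rationalize, multiplying numerator and denominator by the conjugate $(c-1) - (1+c)\sqrt{1 - 4cz^2}$. The new denominator is $(c-1)^2 - (1+c)^2(1 - 4cz^2)$; using the identity $(c-1)^2 - (1+c)^2 = -4c$ this collapses to $4c\big[(1+c)^2 z^2 - 1\big]$, the factor $4c$ cancels against the $2c$ in the numerator, and the asserted expression drops out directly.

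The algebra here is entirely routine, so the only genuine content is the first step: the factorization of a closed walk into irreducible blocks, which is what makes the sum over $k$ collapse into the geometric series $M = 1/(1 - \tM)$. The main point to be careful about is therefore conceptual rather than computational—namely that every closed walk decomposes \emph{uniquely} into irreducible pieces, so that the contributions indexed by $k$ partition all closed walks without overlap—together with the bookkeeping that this is an identity of formal power series in $z$ around the origin, which in particular justifies both the geometric summation and the rationalization step.
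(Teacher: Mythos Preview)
Your proposal is correct and follows essentially the same route as the paper: establish $M(z)=\dfrac{1}{1-\tM(z)}$ via the decomposition of a closed walk into irreducible returns (invoking the preceding lemma), then substitute the already-computed expression for $1-\tM(z)$ and rationalize. If anything, you spell out the rationalization step in more detail than the paper does.
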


\begin{proof}
First, we claim that $M(z)=1+ \sum_{k=1}^{\infty} \tM (z)^k $. Recall that $m_{2n}$ is the coefficient of $z^{2n}$ in $M(z)$. Let us recall some notation from the proof of Lemma 2. Let $W_{n,k}$ be the set of walks of length $2n$ that return exactly $k$ times. Then $m_{2n}=\sum_{k=1}^{\infty} |W_{n,k}|$. On the other hand, by the same lemma, the coefficient of $z^{2n}$ in $\sum_{k=1}^{\infty} \tM (z)^k$ is $\sum_{k=1}^{n} |W_{n,k}|$. Hence $m_{2n}$ is also the coefficient of $z^{2n}$ in $\sum_{k=1}^{\infty} \tM (z)^k$. Thus $M(z)=1+ \sum_{k=1}^{\infty} \tM (z)^k $. Thus

\begin{eqnarray*}
M(z) &=& 1 +  \sum_{n=1}^\infty \tM(z)^n = \frac{1}{1 - \tM(z)}\\
& = &
\frac{2c}{(c- 1) + ( 1 + c)\sqrt{1 - 4 c z^2}} %\\
=
\frac{1}{2} \frac{(c- 1) - ( 1 + c)\sqrt{1 - 4 c z^2}}
{(1 + c)^2 z^2 - 1}.
\end{eqnarray*}
\end{proof}

Now that we have the generating function for $M$ we may expand the formula for $M(z)$ in Lemma 3 as a Taylor series at $z=0$; we find that
\[
\begin{cases}
m_0= 1 \\
m_2=1+c \\
m_4= 1+3c+2c^2 \\
m_6 = 1 + 5 c + 9 c^2 + 5 c^3\\
\end{cases}.
\]
We note this agrees with the even-numbered rows in Figure \ref{fig:triangle_with_c}. 

\section{%
Standard Young Tableaux with at most two rows.
}
\label{section:standard_young_tableau}
Let us begin by recalling some basic facts about standard Young tableaux, see \cite[Chapter 1]{fult}. Let $n$ be a positive integer and $1 \leq \lambda_1 \leq \lambda_2 \leq \cdots \leq \lambda_k$ be such that $\lambda_1 + \lambda_2 + \cdots + \lambda_k = n$. Then $\lambda =(\lambda_1, \dots, \lambda_k)$ is a \textit{partition} of $n$ with $k$ \textit{parts}. To each partition of $n$ with $k$ parts we associate a Young diagram with $\lambda_1$ boxes in the first row, $\lambda_2$ boxes in the second row, and $\lambda_k$ boxes in the $k$th row. This is called the \textit{Young diagram with shape} $\lambda$. For example, when $\lambda = (3,1)$ the Young diagram is: 
\[\ydiagram{3,1}\,.\]
\begin{figure}[t]
\begin{center}
\includegraphics[scale=0.8]{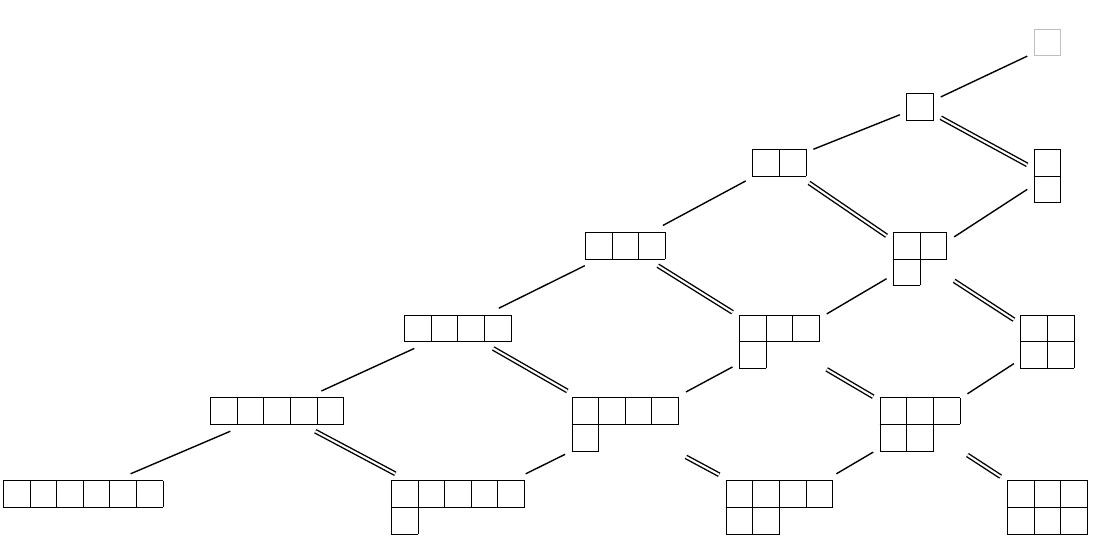} 
\end{center}
\caption{\label{fig:cell-diagram}\small Young diagrams with either one or 2 rows. A single line means a cell is added to the first row and a double line means a cell is added to the second row. The grey cell at the top represents the empty cell.}
\end{figure}
Given a partition $\lambda$ of $n$ we may place the integers $\{1, 2, 3, \dots, n\}$ into the cells of the Young tableau of shape $\lambda$ so that the numbers increase along rows (from left to right) and down columns. This is called a \textit{standard Young tableau} with shape $\lambda$. For example when $\lambda = (3,1)$ there are 3 ways of doing this:
\[  
\begin{ytableau} 1 & 2 & 4 \\ 3 \end{ytableau}\,, \quad
\begin{ytableau} 1 & 3 & 4 \\ 2 \end{ytableau}\,, \quad
\begin{ytableau} 1 & 2 & 3 \\ 4 \end{ytableau}\,.
\]
Given $\lambda$,  a partition of $n$, the number of standard Young tableaux of shape $\lambda$ is usually denoted $f^\lambda$, and there is an explicit formula for computing this, called the hook-length formula. See \cite[Corollary 7.21.6]{stan}. When $k = 1,2$, i.e., $\lambda$ has one or two parts, the formula can be given by a simple recursion which is at the heart of our result, see Figure \ref{fig:cell-diagram}. Suppose $\lambda = (\lambda_1, \lambda_2)$. The recursion stems from the observation that $n$ has to appear in a cell either at the end of the first row or at the end of the second row. If $\lambda_1 > \lambda_2$ and $n$ is in a cell at the end of the first row, then removing this cell gives us a standard Young tableau of shape $(\lambda_1 - 1, \lambda_2)$. If $n$ is in a cell at the end of the second row then removing this cell gives us a standard Young tableau of shape $(\lambda_1, \lambda_2 - 1)$. Hence $f^{(\lambda_1, \lambda_2)} = f^{(\lambda_1 -1, \lambda_2)} + f^{(\lambda_1, \lambda_2 -1)}$, provided we adopt the convention that $f^{(\lambda_1, 0)} = f^{(\lambda_1)}$ and $f^{(\lambda_1, \lambda_2)} = 0$ whenever $\lambda_1 < \lambda_2$. To start the induction, we let $f^{(0)} = 1$. If we let $g_{\lambda_1,\lambda_2} = \binom{\lambda_1 + \lambda_2}{\lambda_1} - \binom{\lambda_1 + \lambda_2}{\lambda_1 + 1}$, then by the Pascalian relation for binomial coefficients, we have $g_{\lambda_1,\lambda_2} = g_{\lambda_1 -1, \lambda_2} + g_{\lambda_1, \lambda_2 -1}$. Moreover $f$ and $g$ satisfy the same boundary conditions; hence $f = g$ or  
\[
f^{(\lambda_1,\lambda_2)} = \binom{\lambda_1 + \lambda_2}{\lambda_1} - \binom{\lambda_1 + \lambda_2}{\lambda_1 + 1}.
\]
From this one has $f^{(n)} = 1$ for all $n$ and $f^{(n,n)} = C_n$. See Figure \ref{fig:f-diagram} for a graphical representation of the recursion.

\begin{remark}\label{rem:dyck_connection}
 Note that by the refection principle \cite[Figure 1 on p. 69]{wf},  $f^{(\lambda_1, \lambda_2)}$  is also the number of weak Dyck paths from $(0,0)$ to $(\lambda_1 + \lambda_2, \lambda_1 - \lambda_2)$. There is also an explicit bijection where the numbers in the upper row of the standard Young tableau indicate the up-steps of the Dyck path and the numbers in the lower row of the tableau indicates the down-steps of the path. 
\end{remark}

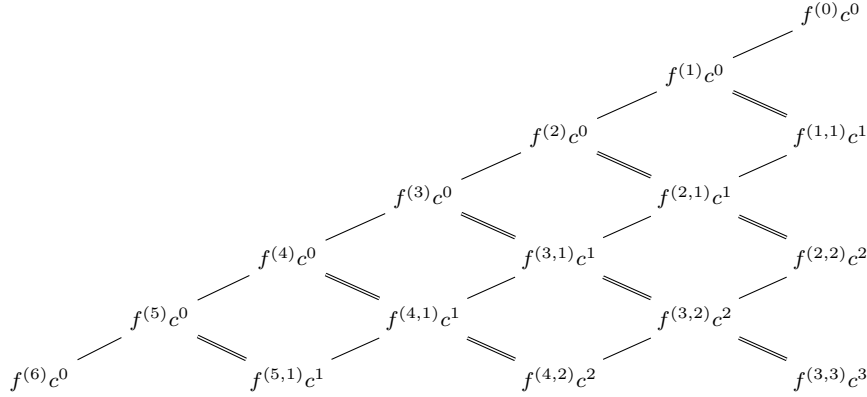
\begin{figure}[t]
\begin{center}
{\tiny\begin{tikzpicture}
\matrix (m) [matrix of math nodes,row sep=1em,column sep=2em,minimum width=1em]
{
      &         &      &    &      &      &  f^{(0)}c^0     \\
      &         &      &    &      &  f^{(1)}c^0   &        \\
      &         &      &    &  f^{(2)}c^0   &      &  f^{(1,1)}c^1     \\
      &         &      &  f^{(3)}c^0 &      &  f^{(2,1)}c^1   &        \\
      &         &  f^{(4)}c^0   &    &  f^{(3,1)}c^1   &      &  f^{(2,2)}c^2  \\
      &  f^{(5)}c^0      &      &  f^{(4,1)}c^1 &      &  f^{(3,2)}c^2   &     \\
f^{(6)}c^0     &         &  f^{(5,1)}c^1   &    &  f^{(4,2)}c^2   &      &  f^{(3,3)}c^3  \\};
         \path (m-7-1) edge (m-6-2);
         \path (m-6-2) edge (m-5-3);
         \path (m-5-3) edge (m-4-4);
         \path (m-4-4) edge (m-3-5);
         \path (m-3-5) edge (m-2-6);
         \path (m-2-6) edge (m-1-7);         
         \path (m-6-2) edge [double](m-7-3);
         \path (m-5-3) edge [double](m-6-4);
         \path (m-4-4) edge [double](m-5-5);
         \path (m-3-5) edge [double](m-4-6);
         \path (m-2-6) edge [double] (m-3-7);
         \path (m-6-4) edge [double](m-7-5);         
         \path (m-5-5) edge [double](m-6-6);
         \path (m-4-6) edge [double](m-5-7); 
         \path (m-7-3) edge (m-6-4);
         \path (m-6-4) edge (m-5-5);
         \path (m-5-5) edge (m-4-6);
         \path (m-4-6) edge (m-3-7);
         \path (m-7-5) edge (m-6-6);
         \path (m-6-6) edge (m-5-7);
         \path (m-6-6) edge [double](m-7-7);
\end{tikzpicture}}
\end{center}
\caption{\label{fig:f-diagram}\small The diagram illustrating the relationship between the $f^{(\lambda)}$'s: $f^{(\lambda_1,\lambda_2)}=  f^{(\lambda_1, \lambda_2-1)} + f^{(\lambda_1-1,\lambda_2)}$. This relation and the boundary conditions imply that $f^{(\lambda_1,\lambda_2)} = \binom{\lambda_1 + \lambda_2}{\lambda_1} - \binom{\lambda_1 + \lambda_2}{\lambda_1 + 1}$. See Remark \ref{rem:dyck_connection} for a connection to weak Dyck paths.}
\end{figure}

\begin{theorem}\label{thm:main_syt}
Let $m_{2n}$ be the number of closed walks of length $2n$ on a $(1+c)$-regular tree. Then
\[m_{2n} = \sum_{k=0}^nf^{(2n-k, k)} \, c^k
= \sum_{k=0}^n
\Big[\binom{2n}{k} -\binom{2n}{k-1}\Big] c^k.\]
In particular the coefficient of $c^k$ in $m_{2n}$ is the number of weak Dyck paths from $(0,0)$ to $(2n , 2n - 2k)$. 
\end{theorem}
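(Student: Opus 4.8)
The plan is to establish the two equalities separately and then read off the last assertion. The equality $\sum_{k} f^{(2n-k,k)}c^k = \sum_k\big[\binom{2n}{k}-\binom{2n}{k-1}\big]c^k$ is immediate from the formula $f^{(\lambda_1,\lambda_2)}=\binom{\lambda_1+\lambda_2}{\lambda_1}-\binom{\lambda_1+\lambda_2}{\lambda_1+1}$ derived above: taking $\lambda_1=2n-k$ and $\lambda_2=k$ gives $f^{(2n-k,k)}=\binom{2n}{2n-k}-\binom{2n}{2n-k+1}=\binom{2n}{k}-\binom{2n}{k-1}$. Granting the first equality, the closing sentence is then just Remark~\ref{rem:dyck_connection}, since the coefficient of $c^k$ is $f^{(2n-k,k)}$, the number of weak Dyck paths from $(0,0)$ to $(2n,2n-2k)$.

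The substance is the first equality $m_{2n}=\sum_{k=0}^n f^{(2n-k,k)}c^k$, which I would prove by showing that the generating function of the right-hand side equals $M(z)$ of Lemma~\ref{lemma:m-generating-function}. Put $P(z)=\sum_{n\ge 0}\big(\sum_{k=0}^n f^{(2n-k,k)}c^k\big)z^{2n}$. By Remark~\ref{rem:dyck_connection}, a weak Dyck path of length $2n$ ending at $(2n,2n-2k)$ has $2n-k$ up-steps and $k$ down-steps, so weighting each up-step by $z$ and each down-step by $cz$ gives it total weight $z^{2n}c^k$. Hence $P(z)$ is the total weight of all weak Dyck paths of even length.

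To compute $P(z)$ I would use the standard level-decomposition of a weak Dyck path ending at height $h$: marking, for each $0\le i<h$, the last up-step that crosses from level $i$ to level $i+1$, the path factors uniquely as $E_0\,U\,E_1\,U\cdots U\,E_h$, where each $E_i$ is an excursion that stays weakly above level $i$ and returns to it, and the $U$'s are those $h$ marked up-steps. Because the weights are level-independent, every excursion factor contributes the same series $E:=\sum_{m\ge 0}C_m(cz^2)^m=N(cz^2)$ while the $h$ up-steps contribute $z^h$; thus a path ending at height $h$ contributes $z^hE^{h+1}$. Since a path's length and final height have equal parity, the even-length paths are precisely those ending at even height, and therefore $P(z)=\sum_{j\ge 0}z^{2j}E^{2j+1}=E/(1-z^2E^2)$.

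It remains to identify this with $M(z)$. Recall from Lemma~\ref{lemma:m-generating-function} and the computation preceding it that $M(z)=1/(1-\tM(z))$ with $\tM(z)=z^2(1+c)N(cz^2)=z^2(1+c)E$, so it suffices to verify $E/(1-z^2E^2)=1/(1-z^2(1+c)E)$. Cross-multiplying and cancelling, this collapses to $E=1+cz^2E^2$, which is exactly the Catalan functional equation $N(w)=1+wN(w)^2$ at $w=cz^2$. I expect the only delicate step to be the bookkeeping in the level-decomposition — pinning down the parity constraint and checking that the entire down-step weight $c$ is absorbed into the excursion factors $N(cz^2)$ — after which the comparison with $M(z)$ is a one-line application of the Catalan relation already recorded in the text.
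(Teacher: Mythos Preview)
Your argument is correct, but it follows a genuinely different route from the paper's. Both proofs finish by identifying the generating function of $\sum_{k}f^{(2n-k,k)}c^k$ with $M(z)$ from Lemma~\ref{lemma:m-generating-function}; the difference lies in how that generating function is computed. The paper works directly with the Pascal-type recursion $f^{(\lambda_1,\lambda_2)}=f^{(\lambda_1-1,\lambda_2)}+f^{(\lambda_1,\lambda_2-1)}$ to obtain relations for the row sums $a_n$ of Figure~\ref{fig:f-diagram}, namely $a_{2n}=(1+c)a_{2n-1}$ and $a_{2n+1}=(1+c)a_{2n}-C_nc^{n+1}$, and then solves the resulting first-order recurrence for $A(z)=\sum a_{2n}z^{2n}$; this keeps the argument inside the ``quantized Pascal triangle'' framework of the introduction and never invokes Remark~\ref{rem:dyck_connection}. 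You instead pass immediately through Remark~\ref{rem:dyck_connection} to weighted weak Dyck paths and use the last-passage decomposition to get $P(z)=E/(1-z^2E^2)$ with $E=N(cz^2)$, after which the match with $M(z)=1/(1-z^2(1+c)E)$ reduces to the Catalan relation $E=1+cz^2E^2$. Your approach is arguably more transparent combinatorially---it explains structurally why the Catalan generating function and the factor $1+c$ appear---while the paper's approach has the virtue of staying closer to the tableau recursion that motivates the theorem and of giving the odd-row statement $a_{2n}=(1+c)a_{2n-1}$ as a byproduct (used in the Corollary that follows).
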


\begin{proof}
Let $a_n$ be the sum of the terms in row $n$ of Figure \ref{fig:f-diagram}. So $a_0 = a_1 = 1$ and $a_2 = 1 + c$. In general $a_{2n} = \sum_{k=0}^n f^{(2n-k,k)}c^k$ and $a_{2n+1} = \sum_{k=0}^n f^{(2n+1-k,k)}c^k$. The relation $f^{(2n-k-1,k)} + f^{(2n-k,k-1)} = f^{(2n-k,k)}$ shows that $(1 + c) a_{2n-1} = a_{2n}$ and $(1 +c) a_{2n} = a_{2n+1} + f^{(n,n)}c^{n+1}$. Thus $a_{2n+1} = (1 +c) a_{2n} - C_n c^{n+1}$. Let $A(z) = \sum_{n=0}^\infty a_{2n} z^{2n}$ be the ordinary generating function of the even terms. Then
\begin{eqnarray*}
A(z) &=& 1 + \sum_{n=1}^\infty a_{2n} z^{2n} 
  = 
1 + \sum_{n=1}^\infty (1 + c) a_{2n -1} z^{2n} \\
& = & 
1 + \sum_{n=1}^\infty (1 + c)[(1 + c) a_{2(n-1)} - c^n C_{n-1}] z^{2n} \\
& = &
1 + (1 + c)^2 z^2 \sum_{n=0}^\infty a_{2n} z^{2n}  -
cz^2(1 + c) \sum_{n=0}^\infty C_{n} (c z^2)^{n} \\
& = &
1 + (1 + c)^2 z^2 A(z) - (1 + c)c z^2 N(c z^2)
\end{eqnarray*}
where $N(z) = \sum_{n=0}^\infty C_n z^n$ is the generating function of the Catalan numbers. Thus
\begin{eqnarray*}\lefteqn{
A(z)( z^2 (1 + c)^2 - 1) = (1 + c)cz^2 N(cz^2) -1 }\\
& = &
(1 + c)cz^2 \frac{ 1 - \sqrt{1 - 2 c z^2}}{2cz^2} - 1  %\\
=
\frac{1}{2} \Big\{ c -1 - (1 + c) \sqrt{1 - 4 c z ^2} \Big\}.
\end{eqnarray*}
From Lemma \ref{lemma:m-generating-function} we have $A(z) = M(z)$. This shows that $m_{2n} = a_{2n}$.
\end{proof}

\begin{corollary}
Let $v_1$ and $v_2$ be two vertices in a $(1 + c)$-regular tree connected by one edge. Let $m_{2n-1}$ be the number of walks starting at $v_1$ and ending at $v_2$. Then $m_{2n-1} = \sum_{k=0}^{n-1} f^{(2n-1-k,k)} c^k$ and the coefficient of $c^k$ is the number of weak Dyck paths starting at $(0, 0)$ to $(2n -1, 2n -2k -1)$.
\end{corollary}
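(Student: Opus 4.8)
The plan is to reduce the odd case to the even case already established in Theorem \ref{thm:main_syt} by means of a single ``last step'' identity, and then to read off the Young tableaux and weak Dyck path statements from the machinery already in place. The identity I would isolate first is
\[
m_{2n} = (1 + c)\, m_{2n-1},
\]
relating the number $m_{2n}$ of closed walks of length $2n$ based at a vertex $v_1$ to the number $m_{2n-1}$ of walks of length $2n-1$ from $v_1$ to an adjacent vertex $v_2$.

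To prove this identity I would decompose a closed walk at $v_1$ according to its final step. Every closed walk of length $2n$ returns to $v_1$ along its last step from exactly one of the $1 + c$ neighbours $w$ of $v_1$; deleting that step produces a walk of length $2n-1$ from $v_1$ to $w$, and appending the step $w \to v_1$ inverts this operation. Hence $m_{2n}$ equals the sum, over the neighbours $w$ of $v_1$, of the number of walks of length $2n-1$ from $v_1$ to $w$. Because the stabiliser of $v_1$ in the automorphism group of the $(1+c)$-regular tree acts transitively on the neighbours of $v_1$, each of these $1+c$ summands equals $m_{2n-1}$, giving the stated identity. I expect this transitivity step to be the only real content of the argument and the point that must be stated with care.

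With the identity in hand the corollary follows by cancellation. Write $a_{2n-1} = \sum_{k=0}^{n-1} f^{(2n-1-k,k)} c^k$ for the sum of the entries in the odd row $2n-1$ of Figure \ref{fig:f-diagram}. The proof of Theorem \ref{thm:main_syt} already supplies the two facts $m_{2n} = a_{2n}$ and $(1+c)\,a_{2n-1} = a_{2n}$. Combining them with the identity above gives
\[
(1+c)\, m_{2n-1} = m_{2n} = a_{2n} = (1+c)\, a_{2n-1},
\]
and cancelling the nonzero factor $1+c$ yields $m_{2n-1} = a_{2n-1} = \sum_{k=0}^{n-1} f^{(2n-1-k,k)} c^k$, which is the first assertion. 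The claim about the coefficients is then immediate from Remark \ref{rem:dyck_connection}: the coefficient $f^{(2n-1-k,k)}$ of $c^k$ counts the weak Dyck paths from $(0,0)$ to $\bigl((2n-1-k)+k,\,(2n-1-k)-k\bigr) = (2n-1,\,2n-2k-1)$, exactly the endpoint in the statement.
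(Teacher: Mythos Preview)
Your proposal is correct and follows essentially the same route as the paper: both arguments prove $m_{2n} = (1+c)\,m_{2n-1}$ by deleting the last step of a closed walk and then combine this with the relations $m_{2n}=a_{2n}$ and $(1+c)\,a_{2n-1}=a_{2n}$ from Theorem~\ref{thm:main_syt} to conclude $m_{2n-1}=a_{2n-1}$. Your version is in fact slightly more careful at one point---you invoke vertex-transitivity on the neighbours of $v_1$ to justify that each of the $1+c$ summands equals $m_{2n-1}$, a step the paper leaves implicit.
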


\begin{proof}
We only have to show $m_{2n} = (1 + c)m_{2n-1}$. By removing the last step from a closed walk of length $2n$ we get a walk of length $2n-1$ starting at $v_1$ and ending one vertex from $v_1$. There are $(1+ c)$ vertices where it might end. 
Moreover given a walk $w$ of length $2n -1$ starting at $v_1$ and ending one edge from $v_1$ there is a unique way to complete $w$ to a closed walk of length $2n$. Thus $m_{2n} = (1 + c) m_{2n -1}$. In the notation of Theorem \ref{thm:main_syt}: $a_{2n-1} = m_{2n-1}$. 
\footnote{Note to referee: the tricky point in implementing your suggestion would be to show that $(1 + c) m_{2n} = m_{2 n + 1} + C_n c^{n+1}$  but there is no physical model for $m_{2n+1}$.}
\end{proof}

\begin{remark}
There does not appear to be an explicit way to assign, to a closed walk of length $2n$ on a $d$-regular tree, a standard Young tableau. Theorem \ref{thm:main_syt} suggests that it is possible; our proof uses generating functions to prove Theorem \ref{thm:main_syt}. 
\end{remark}
So we shall leave this question open and relate these moments to some probability measures on the real line using classical tools now going under the name of free harmonic analysis, see \cite[Chapter 3]{vdn}. 

\section{Some Free Harmonic Analysis.}
\label{section:free_harmonic_analysis}
We have shown in Theorem \ref{thm:main_syt} that the number, $m_{2n}$, of closed walks on a $(1 + c)$-regular tree is given by the sum of the entries in the even-numbered rows of Figure \ref{fig:triangle_with_c}, the coefficients of which count standard Young tableaux of a given shape. Kesten \cite{KE} showed that these numbers are also the moments of a symmetric probability measure on $\bR$, now referred as the \textit{Kesten--McKay law}. We shall now show how to recover the law from the moments using what is known as Stieltjes inversion. This will show that for any $c \geq 0$ the moments of Theorem \ref{thm:main_syt} are the moments of a probability measure, although from the combinatorics we do not have any reason to expect this for non-integer values of $c$. 

Suppose that we have a probability measure on $\bR$; for example, let $\eta(t) = (2 \pi)^{-1} \sqrt{4 - t^2}$ for $|t| \leq 2$, and $0$ otherwise. This is the density of Wigner's famous semi-circle law. Then the odd moments of $\eta$  are $0$, and the
even moments are given by the Catalan numbers:
\[
m_{2n} = \int_{-2}^2 t^{2n} \eta(t)\, dt = C_n.
\]
Let $F$ be the ordinary moment generating function of these moments; then
\[
F(z) = \sum_{n=0}^\infty C_n z^{2n} = \frac{1 - \sqrt{1 - 4 z^2}}{2 z^2}.
\] To perform \textit{Stieltjes inversion} we let 
\begin{equation}\label{eq:cauchy_semi}
G(z) = \frac{1}{z} F\Big( \frac{1}{z} \Big) = \frac{z - \sqrt{z^2 - 4}}{2}.
\end{equation} 
and then observe that
\[
\eta(t) = \lim_{\epsilon \rightarrow 0^+} \frac{-1}{\pi} \im(G(t + i \epsilon)).
\]
%:
$G$ is known as the \textit{Cauchy transform} of $\eta$ and leads our explorations into complex analysis.  We have skipped over a few technical points here, namely how to analytically extend the square root to the complex upper half-plane. This is one of those pleasant cases where the analysis  lets you do the algebra that your heart tells you to do; see \cite[\S 3.1]{ms}. 

We can now do the same for $M$,  the ordinary moment generating function of the Kesten--McKay law. The first step again is to compute the Cauchy transform
\begin{equation}\label{eq:cauchy}
G(z) = \frac{1}{z} M\Big( \frac{1}{z}\Big) = \frac{(1 -c)z + (1 + c)\sqrt{z^2 - 4c}}{2(z^2 - (1 + c)^2)}.
\end{equation}
Then we perform Stieltjes inversion as above and get a density:
\begin{eqnarray}\label{eq:rhoc1}
\rho_c(t) &=& \lim_{\epsilon \rightarrow 0^+} \frac{-1}{\pi} \im(G(t + i \epsilon)) \notag\\
&=&
\begin{cases}
\ds\frac{1 + c}{2 \pi} \frac{\sqrt{4 c - t^2}}{(1  + c)^2 - t^2} & \mbox{for } |t| \leq 2\sqrt{c} \\
0 & \mbox{for } |t| > 2\sqrt{c}
\end{cases}.
\end{eqnarray}
This is the density of the Kesten--McKay law for $c \geq 1$.

Now it is time to be careful about our parameter $c$. We assumed at the outset that we had a $d$-regular tree, and we let $c = d - 1 \geq 1$. When $d = 2$, we have the symmetric random walk on the line and  $\rho_1(t) = \frac{1}{\pi\sqrt{4 - t^2}}$ which is the density, supported on the interval $[-2, 2]$,  known as the \textit{arcsine law}. As mentioned in the introduction, the moment, $m_{2n}$, in this case counts the number of closed walks on the line and these, we know,  are given by the binomial coefficient $\binom{2n}{n}$. On the other hand, when $d = 2$, we have $c = 1$ and the sum in Theorem \ref{thm:main_syt} telescopes to $\binom{2n}{n}$, as $f^{(2n-k,k)} = \binom{2n}{k} -\binom{2n}{k-1}$. 

One can check by contour integration and residue calculus that for $c \geq 1$ we have $\int_{-2\sqrt c}^{2 \sqrt c}\rho_c(t) \, dt = 1$. Thus, for every real $c \geq 1$, $\rho_c$ is a probability density, and its moments are given by Theorem \ref{thm:main_syt} because we did not need to assume that $c$ was an integer in any of the proofs. 

Observe that for $c > 0$ we have $\rho_c(t) = \rho_{c^{-1}}(c^{-1} t)$ (see Figure \ref{fig:graph}). Thus, for $0 < c < 1$, we have
\[
\int_{-2\sqrt c}^{2 \sqrt c}  
\frac{1 + c}{2 \pi} \frac{\sqrt{4 c - t^2}}{(1  + c)^2 - t^2}
\, dt = c.
\] 
In this case we do \textit{not} get a probability measure. In fact, there is ``dark matter'' hiding somewhere, and the obvious place to look is at $\pm(1 + c)$, i.e., the poles of $G$ (see (\ref{eq:cauchy})). The dark matter turns out to be point masses, and we can compute the weights of these by observing that, for $G$ as in equation (\ref{eq:cauchy})
\[
\lim_{z \rightarrow 1 +c} (z- (1 + c))G(z) = (1 - c)/2.
\]

\begin{figure}
\includegraphics{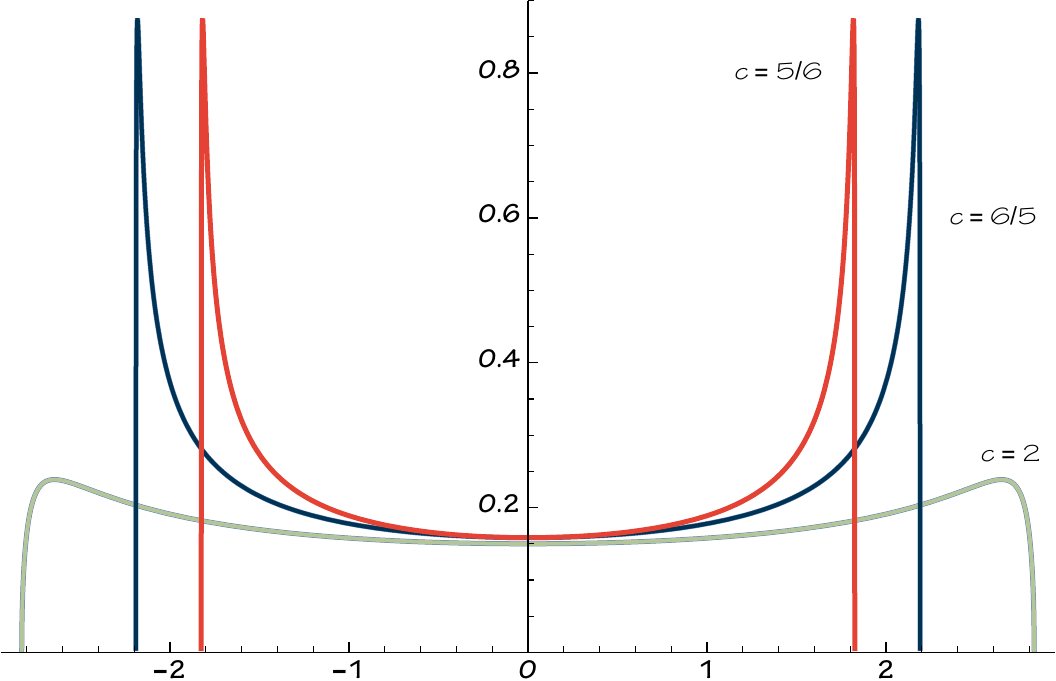}
\caption{\label{fig:graph}\small We have plotted the graph of $\rho_c$ for three values of $c$: $c = 2$ in green, $c = 6/5$ in blue, and $c = 5/6$ in red.}
\end{figure}
Indeed, that these limits give us the weights of the point masses is shown in \cite[Prop. 8]{ms}. Hence, for $0 < c < 1$, the Kesten--McKay law is given by
\begin{equation}\label{eq:rhoc2}
\frac{1 - c}{2} \delta_{-(1 + c)}
+
\frac{1 + c}{2 \pi} \frac{\sqrt{4 c - t^2}}{(1  + c)^2 - t^2}\, dt
+
\frac{1 - c}{2} \delta_{(1 + c)},
\end{equation}
where the continuous part is supported on the interval $[-2 \sqrt c, 2 \sqrt c]$. 

For any $ c> 0$, we now have a probability measure which we call \textit{the Kesten--McKay process} to reflect the fact that the parameter $c$ can be any positive real number. When $c = 0$, we get the distribution of a symmetric Bernoulli random variable with variance 1. See Figure \ref{fig:graph} for plots of $\rho_c$ for three different values of $c$. For non-integer values of $c$ there is no longer a random walk model, but we shall show that there \textit{is} a random matrix model.

\section{Explicit Realization: Random Matrices.}
\label{section:random_matrix_theory}

In this section, we consider the eigenvalue distribution of some $n \times n$ random matrices, and then observe what happens as $n \rightarrow \infty$. This gives us an explicit construction for each $c > 0$ of a random matrix model whose eigenvalue distribution converges to the corresponding Kesten--MacKay law. In Figure 5 we show some densities of the Kesten--Mackay law, and in Figures 6 and 7 we show histograms of the eigenvalue distribution when the matrix is $800 \times 800$. While we do not prove the convergence here the figures make the claim credible. 

A \textit{random matrix} is a matrix where the entries are random variables. An important example of such is a random rotation (or a random reflection), i.e., a random orthogonal matrix. An orthogonal matrix takes the unit sphere onto itself and we want to sample our orthogonal matrices, so that the probability that the north pole of our sphere is mapped into a given region is proportional to the Lebesgue measure of that region. This describes normalized Haar measure on the orthogonal group, and a random matrix produced in this way is said to be \textit{Haar distributed}, see \cite[Proposition 3.2.1]{kp}. If we consider the self-adjoint matrix $O + O^{-1}$, its eigenvalue distribution converges, as the size of the matrix tends to $\infty$,  to the arcsine law $\rho_1$. If we have independent Haar distributed random orthogonal matrices $O_1$ and $O_2$, then the eigenvalue distribution of $O_1 + O_1^{-1} + O_2 + O_{2}^{-1}$ converges to the Kesten--McKay law $\rho_3$. In general, if we have $l$ independent Haar distributed random orthogonal matrices $O_1, \dots, O_l$, the eigenvalue distribution of $O_1 + O_1^{-1} + \cdots + O_{l} + O_l^{-1}$ converges to $\rho_{2l -1}$. This gives us an explicit realization of $\rho_c$ when $c$ is an odd integer. In the next paragraph, we give a realization for any positive $c$. 

\begin{figure}
\includegraphics[scale=0.6]{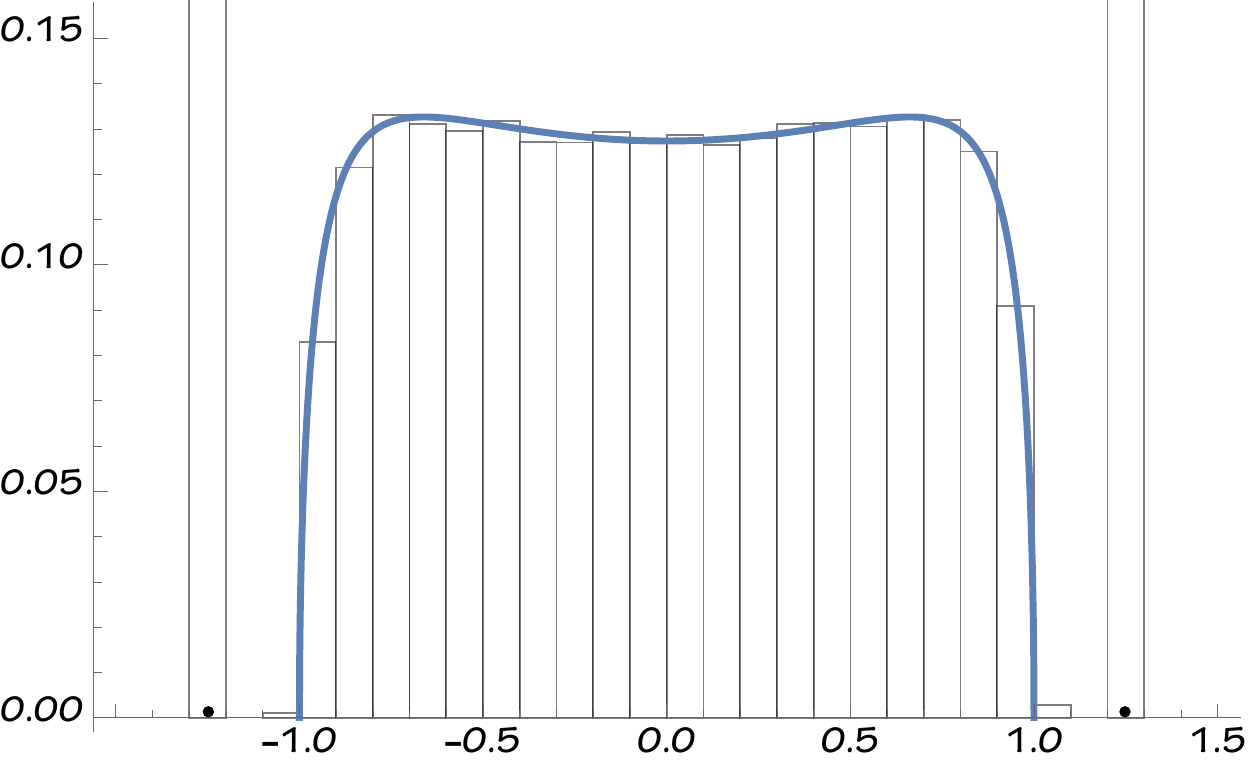}
\caption{\label{fig:kesten-1-4}\small{}The histogram shows the eigenvalue distribution of $X_{n,k}$ for $k = 800$ and $n = 1000$, i.e., $c = 1/4$. The curve is the graph of $\rho_{c}$. Note the two atoms at $\pm 5/4$; they each have mass $3/8$ and the center section has mass $c = 1/4$. To smooth the fluctuations we used 500 realizations.}
\end{figure}

Fix $c > 0$. Let $D$ be a $n \times n$ diagonal matrix with diagonal entries $\pm 1$ such that when $n$ is even the trace  $\Tr(D) = 0$, and when $n$ is odd $\Tr(D) = 1$. Let $O$ be an $n \times n$ Haar distributed random orthogonal matrix. Suppose $k = [n/(1 + c)]$ is the integer part of $n/(1 + c)$. Let $P$ be the projection diagonal matrix with the first $k$ diagonal entries being $1$ and the remaining $0$, so that $\Tr(P) = k$. Let $X_{n,k} = (n/k)\, PODO^{-1}P$, viewed as a $k \times k$ matrix in $M_k(\bC)$. The self-adjoint matrix $X_{n,k}$ will have $k$ eigenvalues, repeated according to multiplicity. In Figures \ref{fig:kesten-1-4} and \ref{fig:kesten-4-1} we have plotted a histogram showing the distribution of the (random) eigenvalues. The smooth curves show the density $\rho_c$ for either $c = 1/4$ (Figure \ref{fig:kesten-1-4}) or $c = 4$ (Figure \ref{fig:kesten-4-1}). That the curve fits the histogram so well is the conclusion of a general theorem on asymptotic free independence. In our last section, we shall show  these distributions can be described as the sum of freely independent Bernoulli random variables, of mean $0$ and variance $1$. 

\section{Free Probability.}
\label{section:free_probability}
Given a real valued random variable $X$, we can define its Cauchy transform $G$ using its ordinary moment generating function as was done in equation~(\ref{eq:cauchy}). This always works for bounded random variables; in the unbounded case there are more general techniques. We consider $G$ to be a function of a complex variable $z$, and when $X$ is bounded the domain of $G$ will be at least the complex plane less a closed interval of real numbers containing the range of $X$. Outside of some circle centered at the origin, $G$ will have a compositional inverse $K$, i.e., $K(G(z)) = G(K(z)) = z$ on appropriate domains (see \cite[\S 3.4]{ms}). At the origin, $K$ will always have a simple pole with residue $1$ and we let $R(z) = K(z) -1/z$.  $R$ is analytic on some open disk centered at $0$. This is Voiculescu's $R$-\textit{transform}.

When $X$ is the semi-circle law, $K(z) = z + z^{-1}$ and so $R(z) = z$; see equation (\ref{eq:cauchy_semi}). When $X$ is a symmetric Bernoulli random variable with variance $1$, we have that the ordinary moment generating function, the Cauchy transform, its inverse, and the $R$-transform can easily be found by solving a quadratic equation, and are given respectively by 
\[
M(z) = \ds\frac{1}{1 - z^2}, \quad
G(z) = \ds\frac{z}{z^2 - 1},
\]
\[
K(z) = \ds\frac{1 + \sqrt{1 + 4 z^2}}{2 z}, \quad
R(z) = \ds\frac{-1 + \sqrt{1 + 4 z^2}}{2 z}.
\]

\begin{figure}
\includegraphics[scale=0.8]{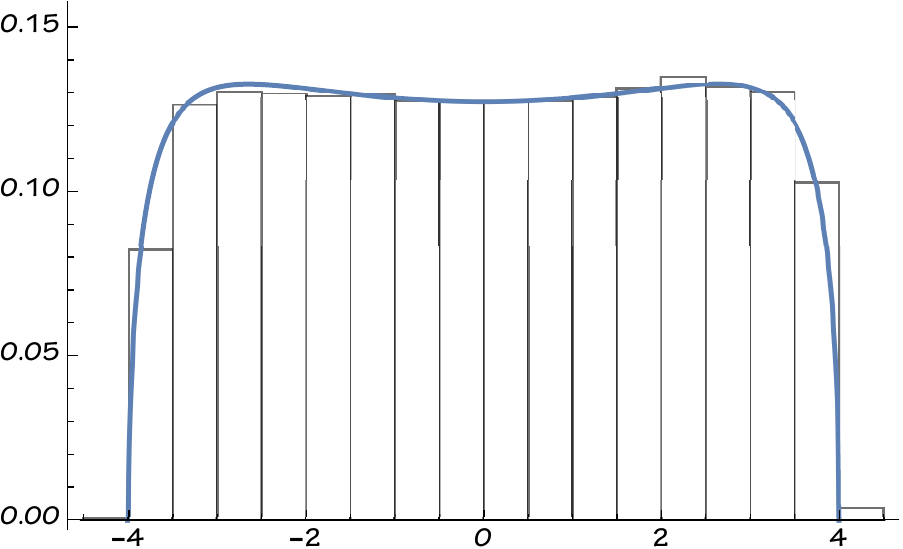}
\caption{\label{fig:kesten-4-1}\small The histogram shows the eigenvalue distribution of $X_{n,k}$ for $k = 200$ and $n = 1000$, i.e., $c = 4$. The curve is the graph of $\rho_{c}$. As with Fig.~\ref{fig:kesten-1-4} we have used 500 realizations.}
\end{figure}

When $X$ has the distribution of the Kesten--McKay law with parameter $c$ we have from equation~(\ref{eq:cauchy}), after solving another quadratic equation:
\[
K(z) = \frac{1 - c + (1 + c)\sqrt{ 1 + 4 z^2}}{2 z}.
\]
Thus for the Kesten--McKay law with parameter $c$ we have 
\begin{equation}\label{eq:r_transform_kesten}
R(z) = (1 + c)\ds \frac{-1 + \sqrt{1 + 4 z^2}}{2z},
\end{equation}
which is $1 + c$ times the $R$-transform of the Bernoulli random variable above. 

Voiculescu defined a new kind of independence for random variables, which he called \textit{free independence} because of its connection to free groups. (Note that Figure.~\ref{fig:4-tree} is the Cayley graph of the free group on 2 generators.) 

Let $\mathcal{A}$ be a unital algebra over $\mathbb{C}$ and $\phi: \mathcal{A} \rightarrow \mathbb{C}$ a linear functional such that $\phi(1_{\mathcal{A}})=1 \in \mathbb{C}$. We say that the pair $(\mathcal{A}, \phi)$ is a noncommutative probability space. Additionally, we will consider an antilinear operation $*$ such that $(a^*)^{*}=a$ ; $(ab)^{*}=b^{*}a^{*}$; and $\phi(a^* a)\geq 0$. Hence we say that $(\mathcal{A}, \phi)$ is a $*-$probability space. Set a fixed index $I$ and for each $i\in I$, let $A_i \subset \mathcal{A}$ be a unital subalgebra. The subalgebras $A_i$ are said to be freely independent if, for $k\in \mathbb{Z}^{+}$, 

\begin{equation*}
\label{def:free_indep}
\phi(a_1 \cdots a_k)=0
\end{equation*}

whenever $a_j \in A_{i_j}$ for some $i_j \in I$, $\phi(a_j)=0$ for all $1\leq j \leq k$, given that consecutive indices are different, i.e., $i_1 \leq i_2 \leq \cdots \leq i_k$.

The $R$-transform is so useful since it is linear relative to the addition of free random variables. Namely, if the random variables $X_1$ and $X_2$ are freely independent with $R$-transforms $R_1$ and $R_2$ respectively, then the $R$-transform of $X_1 + X_2$ is $R_1 + R_2$. In this way, the distribution of $X_1 + X_2$ may be computed by going back through $K$ and $G$ and then using Stieltjes inversion to find the distribution.

Voiculescu showed that many random matrix models exhibit this independence asymptotically, and thus free independence is now a key tool in random matrix theory. In particular, independent Haar distributed orthogonal matrices are asymptotically free, so $\rho_{2l-1}$ is the distribution of $l$ freely independent arcsine random variables as claimed in the previous section. 

In the case of continuous $c$, the $n \times n$ matrices $ODO^{-1}$ and $P$ (which is not random) are asymptotically freely independent, and thus their asymptotic distribution can be calculated by the rules of free probability; this is exactly the phenomenon observed in Figures \ref{fig:kesten-1-4} and \ref{fig:kesten-4-1}.  That the limit eigenvalue distribution of $\frac{n}{k} PODO^{-1}P$, regarded as a $k \times k$ matrix,  converges, as $n \rightarrow \infty$,  to the Kesten--McKay law follows from two results that we do not present here as it would be beyond the scope of this article. The first is that $ODO^{-1}$ is asymptotically free from $P$; this is shown in \cite[Theorem 36]{mp}. If we replace $O$ with a unitary matrix, a proof of asymptotic freeness can be found in \cite[Theorem 23.14]{ns}. That the $R$-transform of the limit is as claimed in equation~(\ref{eq:r_transform_kesten}) is proved in \cite[Example 11.35]{ns}. See also \cite[Example 12.8]{ns}. On the other hand, we can easily check, via the $R$-transform, that the limit distribution is the free additive convolution of symmetric Bernoulli random variables with variance $1$. If we interpret a single symmetric Bernoulli random variable with variance $1$ as a coin toss then the theorem below justifies the claim made in the introduction. 

\begin{theorem}
For an integer $c > 0$, the Kesten--McKay law is the free additive convolution of $d = 1 +c $ copies of the distribution of a symmetric Bernoulli random variable with variance $1$. 
\end{theorem}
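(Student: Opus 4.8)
The plan is to exploit the linearity of Voiculescu's $R$-transform under free additive convolution, which collapses the statement into a one-line comparison of two transforms already computed in Section~\ref{section:free_probability}.

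First I would recall the two relevant formulas. The symmetric Bernoulli random variable with variance $1$ has $R$-transform
\[
R_{\mathrm{Ber}}(z) = \frac{-1 + \sqrt{1 + 4 z^2}}{2 z},
\]
and, from equation~(\ref{eq:r_transform_kesten}), the Kesten--McKay law with parameter $c$ has $R$-transform $(1 + c)\, R_{\mathrm{Ber}}(z)$. Next I would invoke additivity: if $X_1, \dots, X_d$ are freely independent, each distributed as the Bernoulli variable above, then the $R$-transform of the sum $X_1 + \cdots + X_d$ is $\sum_{j=1}^d R_{\mathrm{Ber}} = d\, R_{\mathrm{Ber}}$. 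Because $c$ is a positive integer, $d = 1 + c$ is a genuine integer $\geq 2$, so this $d$-fold free convolution is unambiguous and its $R$-transform is exactly $(1 + c)\, R_{\mathrm{Ber}}(z)$, matching the Kesten--McKay $R$-transform.

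I would then close the argument by uniqueness. The $R$-transform determines $K(z) = R(z) + 1/z$, hence the Cauchy transform $G$ as the compositional inverse of $K$, and hence---via Stieltjes inversion, exactly as carried out in Section~\ref{section:free_harmonic_analysis}---the underlying probability measure. Equality of $R$-transforms therefore forces equality of laws. I do not expect a substantive obstacle: both distributions are bounded with compact support, so the $R$-transform/distribution correspondence recalled in the excerpt applies without qualification, and all the analytic content is already packaged in the two formulas above. The only bookkeeping point is to note that the $d$-fold convolution makes literal sense precisely because $c$ is an integer, which is why the hypothesis appears in the statement.
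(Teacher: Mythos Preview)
Your proposal is correct and matches the paper's own proof essentially line for line: both reduce the claim to the observation, already recorded in equation~(\ref{eq:r_transform_kesten}), that the Kesten--McKay $R$-transform is $(1+c)$ times the Bernoulli $R$-transform, and then appeal to additivity of the $R$-transform under free convolution. The only difference is that you spell out the uniqueness step via $K$, $G$, and Stieltjes inversion, which the paper leaves implicit.
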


\begin{proof}
We only have to check that the $R$-transform of $\rho_c$ is $(1 + c)$ times the $R$-transform of a symmetric Bernoulli random variable with variance $1$. This was already done in equation (\ref{eq:r_transform_kesten}). 
\end{proof}

\setbox1=\vtop{\hbox{\includegraphics{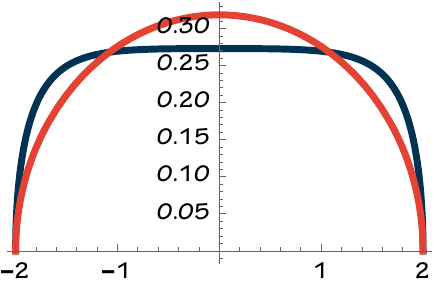}\ \ }}
\noindent
\hangindent\wd1\hangafter-6\hskip-\wd1\smash{%
\hbox to \wd1{\raise -6em\box1\hfil}}%
Sticking with this model, we can observe another central feature of free 
probability: the \textit{free central limit theorem} (see \cite[\S3.5] 
{vdn} or \cite[Lecture 8]{ns}). For $c > 1$,	 we let  $\mu_c(t) = \sqrt{c}    
\rho_c( \sqrt c t) = \eta(t) (1 - t^2/(\sqrt c + \sqrt{c^{-1}}))^{-1}$, where 
$\eta$ 
is the density of the semi-circle law given above. This yields a 
probability measure with support on $[-2, 2]$. The figure above shows both 
densities: the blue curve corresponds to $\mu_6$, and the red curve is the 
density of the semi-circle law. By letting $c \longrightarrow \infty$, we see 
that the Kesten--McKay density converges to that of the semi-circle law. This is 
a striking example of the free central limit theorem which says that the sum,
\[ \frac{X_1 + \cdots + X_c}{\sqrt c},\]
of $c$ freely independent random variables, suitably normalized, is asymptotically 
semi-circular as $c \rightarrow \infty$.

\thebottomline\end{document}